\def\frak{\mathfrak}
\def\Bbb{\mathbb}
\def\Cal{\mathcal}
\let\phi\varphi
\newcommand{\al}{\alpha}
\newcommand{\ga}{\gamma}
\newcommand{\ps}{\psi}
\newcommand{\si}{\sigma}
\newcommand{\Ga}{\Gamma}
\newcommand{\La}{\Lambda}
\newcommand{\Ph}{\Phi}
\newcommand{\Ps}{\Psi}
\newcommand{\Om}{\Omega}
\newcommand{\Ups}{\Upsilon}
\def\Rho{\mbox{\textsf{P}}}
\newcommand{\barm}{\overline{M}}
\newcommand{\vol}{\operatorname{vol}}
\newcommand{\Ric}{\operatorname{Ric}}
\newcommand{\rpl}                         
{\mbox{$
\begin{picture}(12.7,8)(-.5,-1)
\put(0,0.2){$+$}
\put(4.4,3.1){\oval(8,8)[r]}
\end{picture}$}}
\newcounter{theorem}
\newtheorem{thm}[theorem]{Theorem}
\newtheorem*{thm*}{Theorem \thesubsection}
\newtheorem{lemma}[theorem]{Lemma}
\newtheorem{prop}[theorem]{Proposition}
\newtheorem{cor}[theorem]{Corollary}
\newtheorem*{lemma*}{Lemma \thesubsection}
\newtheorem*{prop*}{Proposition \thesubsection}
\newtheorem*{cor*}{Corollary \thesubsection}
\theoremstyle{definition}
\newtheorem*{definition*}{Definition \thesubsection}
\newtheorem*{example*}{Example \thesubsection}
\theoremstyle{remark}
\newtheorem*{remark*}{Remark \thesubsection}
\def\sideremark#1{\ifvmode\leavevmode\fi\vadjust{\vbox to0pt{\vss
 \hbox to 0pt{\hskip\hsize\hskip1em
 \vbox{\hsize3cm\tiny\raggedright\pretolerance10000
  \noindent #1\hfill}\hss}\vbox to8pt{\vfil}\vss}}}%
\begin{document}

\title{Scalar Curvature and Projective Compactness}

\author{Andreas \v Cap and A.\ Rod Gover}

\address{A.\v C.: Faculty of Mathematics\\
University of Vienna\\
Oskar--Morgenstern--Platz 1\\
1090 Wien\\
Austria\\
A.R.G.:Department of Mathematics\\
  The University of Auckland\\
  Private Bag 92019\\
  Auckland 1142\\
  New Zealand} 
\email{Andreas.Cap@univie.ac.at}
\email{r.gover@auckland.ac.nz}

\begin{abstract}
  Consider a manifold with boundary, and such that the interior is
  equipped with a pseudo-Riemannian metric. We prove that, under mild
  asymptotic non-vanishing conditions on the scalar curvature, if the
  Levi-Civita connection of the interior does not extend to the
  boundary (because for example the interior is complete) whereas its
  projective structure does, then the metric is projectively compact
  of order 2; this order is a measure of volume growth toward
  infinity. The result implies a host of results including that the
  metric satisfies asymptotic Einstein conditions, and induces a
  canonical conformal structure on the boundary.  Underpinning this work is
  a new interpretation of scalar curvature in terms of projective
  geometry. This enables us to show that if the projective structure
  of a metric extends to the boundary then its scalar curvature also
  naturally and smoothly extends.
\end{abstract}

\subjclass[2010]{Primary 53A20, 53B10, 53C21; Secondary 35N10, 53A30,58J60}

\thanks{Both authors gratefully acknowledge support from the Royal
  Society of New Zealand via Marsden Grant 13-UOA-018; A\v C
  gratefully acknowledges support by projects P23244--N13 and
  P27072--N25 of the Austrian Science Fund (FWF) and the hospitality
  of the University of Auckland. }

\maketitle

\pagestyle{myheadings} \markboth{\v Cap, Gover}{Scalar Curvature and 
Projective Compactness}

\section{Introduction}

Throughout this article we consider a smooth manifold $\barm$ of
dimension $n+1$ with boundary $\partial M$ and interior $M$, and the
basic topic is that of relating geometric structures on $M$ to geometric
structures (in general of a different type) on $\partial M$. Apart from
their intrinsic interest in differential geometry, questions of this
type play an important role in several other areas of mathematics
(e.g.~scattering theory) and theoretical physics (e.g.~general
relativity and the AdS/CFT--correspondence), see the introduction of
\cite{Proj-comp} for a more detailed discussion. 

In particular we are interested in a problem of the following
nature. Suppose we start with a geometric structure on $M$, which on
its own does not admit a smooth extension to $\barm$, in such a way
that the boundary $\partial M$ is ``at infinity'' in a suitable
sense. Then one may ask whether some weakening of the structure in
question does admit such an extension, and whether this extension
induces a structure on $\partial M$ linked to the interior geometry. A
classical example of this situation, with many applications, involves a
notion of conformal extension. In this case, one starts with a
pseudo--Riemannian metric $g$ on $M$ that does not admit a smooth
extension to $\barm$, for example because it is complete. Then one may
first ask whether the conformal structure $[g]$ on $M$, determined by
$g$, admits a smooth extension to all of $\barm$. Explicitly, this
means that, for each boundary point $x\in\partial M$, there is an open
neighborhood $U\subset\barm$ of $x$ and a smooth nowhere vanishing
function $f:U\cap M\to\Bbb R_{>0}$ such that the pseudo--Riemannian
metric $f g$ on $U\cap M$ admits a smooth extension to all of $U$ for
which the values on $U\cap\partial M$ are non--degenerate as bilinear
forms on tangent spaces.

In most applications, this idea is refined to the more restrictive,
but also more useful, concept of \textit{conformal compactness}.
Rather than assuming an arbitrary smooth rescaling of $g$ extends to
the boundary, one requires that $r^2g$ admits a smooth extension to
the boundary, where $r:U\to\Bbb R_{\geq 0}$ is a \textit{local
  defining function} for the boundary, see section \ref{2.1} for the
formal definition. This enforces a certain uniformity in the growth
rate of the metric $g$ as it approaches the boundary. The property
that $r^2g$ admits a smooth extension to the boundary is independent
of the specific defining function $r$, and it follows that the
conformal class of the induced pseudo--Riemannian metric on $\partial
M$ is also independent of $r$.  This conformal class is then the
induced structure on the boundary, and the boundary so equipped is
then referred to as the \textit{conformal infinity} of the interior.

Alternative to the underlying conformal structure of a
pseudo--Riemannian metric $g$, one may also consider the underlying
projective structure of its Levi--Civita connection $\nabla^g$. The
resulting applications of projective differential geometry to
pseudo--Riemannian geometry have been intensively and successfully
studied during the last years. Because of the resulting emphasis on
geodesic paths, this approach should be particularly useful for
applications in general relativity and scattering. Indeed, as brought
to our attention by P.~Nurowski, there have been attempts to associate
a future time--like projective infinity to space--times, see
\cite{sachs}.

In the setting of a manifold with boundary $\barm=M\cup\partial M$ as
above, one can start from a torsion free linear connection on $TM$,
which does not extend to $\barm$ and assume that its underlying
projective structure extends to $\barm$. Via the Levi--Civita
connection, this concept is then automatically defined for
pseudo--Riemannian metrics. Our first result in this article is an
explicit characterization of extendability of the projective structure
in Proposition \ref{prop-extend}.

In analogy with the way in which conformal compactification usefully
restricts conformal extension (as discussed above) a concept of
\textit{projective compactness}, describing a special type of
projective extension, was introduced in our articles \cite{Proj-comp,CGHjlms}
and studied further in \cite{Proj-comp2}. This involves a parameter
$\al>0$, called the order of projective compactness, and one usually
assumes that $\al\leq 2$. The latter ensures that $\partial M$ is at
infinity according to the parameters of geodesics approaching the
boundary, see Proposition 2.4 in \cite{Proj-comp}. For a projectively
compact connection that preserves a volume density, $\al$
is a measure of growth rate of this volume density toward the boundary at infinity, see \cite{Proj-comp} and Section \ref{2.1}.

There are two results in \cite{Proj-comp} which motivate the
developments in this article. Assume that $\nabla$ is a linear
connection on $TM$, which does not admit a smooth extension to any
neighborhood of a boundary point, but whose projective structure does
admit a smooth extension to all of $\barm$. Then in Theorem 3.3 of
\cite{Proj-comp} it is shown that if $\nabla$ preserves a volume
density and is Ricci flat, then it is projectively compact or order
$\al=1$. On the other hand, if $\nabla$ is the Levi--Civita connection
of a non--Ricci--flat Einstein metric, then Theorem 3.5 of
\cite{Proj-comp} shows that $\nabla$ is projectively compact of order
$\al=2$. In both cases, one actually obtains reductions of projective
holnomy, which lead to much more specific information.

The main result of this article is Theorem \ref{thm:main}, which
provides a vast generalization of the second of these results. Here
being Einstein is replaced by a much weaker condition on the
asymptotics of the scalar curvature of $g$, but we still can conclude
projective compactness of order $\al=2$. Via the results of
\cite{Proj-comp2}, this provides a number of further facts about $g$,
including a certain asymptotic form, an asymptotic version of the
Einstein property, and the fact that $\partial M$ inherits a canonical
conformal structure determined by $g$. Some of these consequences are summarised Corollary \ref{final}.

 The results in Theorem
\ref{thm:main}, along with converse results in \cite{Proj-comp2},
expose a previously unseen critical role for the scalar curvature in
questions of projective compactification.
In Proposition \ref{prop:extend} we show that if the projective
structure of a metric on $M$ extends to $\barm$ then, surprisingly,
its scalar curvature also extends smoothly (as a function) to
$\barm$. The way this works is also important for our treatment.   Several of
the arguments used in proving this result should be of considerable
independent interest.

\section{Results}

\subsection{Projective structures and projective
  compactness}\label{2.1} 

Two torsion free linear connections $\nabla$ and $\hat\nabla$ on the
tangent bundle of a smooth manifold $N$ are called
\textit{projectively equivalent} if they have the same geodesics up to
parametrisation or, equivalently, if there is a one--form
$\Ups\in\Om^1(N)$ such that
\begin{equation}\label{proj-def}
\hat\nabla_\xi\eta=\nabla_\xi\eta+\Ups(\xi)\eta+\Ups(\eta)\xi,
\end{equation}
for vector fields $\xi,\eta\in\frak X(N)$. A \textit{projective
  structure} on $N$  is a projective equivalence class of such
connections. 

Assume that $\barm$ is a smooth manifold with boundary $\partial M$
and interior $M$. Given a torsion free linear connection $\nabla$ on
$TM$, we can define what it means for the projective structure
determined by $\nabla$ to admit a smooth extension to all of $\barm$.
Explicitly, this is the property that, for any boundary point
$x\in\partial M$, there is an open neighborhood $U$ of $x$ in $\barm$
and a one--form $\Ups\in\Om^1(U\cap M)$ such that for all vector
fields $\xi,\eta\in\frak X(U)$ (so these are smooth up to the
boundary), also $\hat\nabla_\xi\eta$ as defined in \eqref{proj-def}
admits a smooth extension to the boundary. It is then clear that the
resulting connection $\hat\nabla$ on $TU$ is uniquely determined up to
projective equivalence, so in this way one indeed obtains an extension
of the projective structure to $\barm$.

More restrictively, for a constant $\al>0$ we say that $\nabla$ is
\textit{projectively compact of order $\al$}, if in the above
considerations the one--form $\Ups$ can be taken to be $\tfrac{dr}{\al
  r}$ for a smooth defining function $r:U\to\Bbb R_{\geq 0}$ for the
boundary. By definition, the latter condition means that
$r^{-1}(\{0\})=U\cap\partial M$ and $dr$ is nowhere vanishing on
$U\cap\partial M$.

If $\nabla$ preserves a volume density, then it has been established in
\cite{Proj-comp} that, in addition to the fact that the projective
structure defined by $\nabla$ extends to $\barm$, projective
compactness of order $\al$ of $\nabla$ only requires a specific growth
rate (related to $\al$) of the parallel volume form towards the
boundary. This can be most conveniently formulated in terms of
defining densities as follows. Observe first, that the notion of a
defining function (for a hypersurface) can be extended to the notion
of a defining section of any real line bundle without problems. The
point here is that for a section of a line bundle, the derivative with
respect to a linear connection is, along the zero--set of the section,
independent of the connection. Hence one can simply require that one
has a section for which the zero--set coincides with the hypersurface
in question and that the derivative of the section with respect to
some linear connection is nowhere vanishing along the zero--set.

Now on any smooth manifold $N$, there is a family of natural line
bundles, obtained from the (trivial) bundle of volume densities by
forming real powers. In the presence of a projective structure, there
is an established convention of \textit{projective weight} for these
line bundles, which are then denoted by $\Cal E(w)$ with $w\in\Bbb R$,
see \cite{BEG}.  The convention is fixed by $\Cal
E(w)=\Cal E(1)^w$ and the fact that if $N$ has dimension $n$, then the
bundle of volume densities on $N$ is $\Cal E(-n-1)$. The following
result is proved in Proposition 2.3 of \cite{Proj-comp}.

\begin{lemma}\label{lem2.1}
Let $\barm$ be a smooth manifold of dimension $n+1$ with boundary
$\partial M$ and interior $M$. Let $\nabla$ be a torsion-free linear
connection on $TM$ whose projective structure admits a smooth
extension to $\barm$ and which preserves a volume density on $M$.

Then $\nabla$ is projectively compact of order $\al$ if and only if
there is a defining density $\si\in\Ga(\Cal E(\al))$ for $\partial M$
such that $\si$ is parallel for $\nabla$ on $M$.  
\end{lemma}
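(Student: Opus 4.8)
The plan is to prove the equivalence by translating the definition of projective compactness into the behaviour of a parallel density. Recall that projective compactness of order $\al$ means that near each boundary point the projective structure extends via a one--form of the special form $\Ups=\tfrac{dr}{\al r}$ for a defining function $r$. The key observation is that a volume density and a section of $\Cal E(\al)$ carry equivalent information, and that the rescaling governed by $\Ups$ translates precisely into a rescaling of the parallel volume density by a power of $r$.

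First I would set up the dictionary between connections and densities. Since $\nabla$ preserves a volume density $\mu$ on $M$, this $\mu$ corresponds to a nowhere vanishing parallel section of $\Cal E(-n-1)$, and real powers give a nowhere vanishing parallel section $\tau\in\Ga(\Cal E(\al))$ on $M$. The main computational step is to record how the parallel density transforms under a projective change \eqref{proj-def}: if $\hat\nabla$ and $\nabla$ are related by $\Ups$, then on densities of weight $w$ the connections differ by a term proportional to $w\,\Ups$. Concretely, for $\ta\in\Ga(\Cal E(w))$ one has $\hat\nabla_\xi\ta=\nabla_\xi\ta+w\,\Ups(\xi)\ta$; I would derive this from \eqref{proj-def} by computing the induced action on volume densities (the trace over the tangent bundle picks up the factor $n+1$, and passing to weight $w$ scales it to $w$). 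This is the identity that drives both implications.

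For the forward direction, suppose $\nabla$ is projectively compact of order $\al$, so near $x$ the extended connection $\hat\nabla$ relates to $\nabla$ by $\Ups=\tfrac{dr}{\al r}$. Working in weight $w=\al$, the transformation formula gives, for a parallel section of the extended structure, a factor that cancels exactly with $r$: I would set $\si:=r\cdot(\text{the }\hat\nabla\text{--parallel density})$, or equivalently check that the $\nabla$--parallel density $\tau$ is, up to an $\hat\nabla$--parallel nonzero factor, equal to $r^{-1}$ times a smooth nonvanishing density. Using $\Ups(\xi)=\tfrac{1}{\al}\tfrac{\xi\cdot r}{r}$ and $w=\al$, the weight--$w$ term $w\,\Ups$ becomes $\tfrac{dr}{r}$, which is precisely $d\log r$; this shows that $r^{-\al}$ times the $\hat\nabla$--parallel density is $\nabla$--parallel, so rescaling produces a $\nabla$--parallel section $\si\in\Ga(\Cal E(\al))$ whose zero locus is $\partial M$ and whose derivative is nonvanishing there, i.e.\ a defining density. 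For the converse, given a $\nabla$--parallel defining density $\si\in\Ga(\Cal E(\al))$, I would reverse this: near the boundary write $\si=r\,\nu$ for a smooth nonvanishing density $\nu$ and a defining function $r$, let $\hat\nabla$ be the connection for which $\nu$ is parallel, and verify via the same transformation formula that $\hat\nabla$ and $\nabla$ differ exactly by $\Ups=\tfrac{dr}{\al r}$, and that $\hat\nabla$ extends smoothly to the boundary because $\nu$ does. The requirement that $d\si$ be nonvanishing along the zero set is what guarantees $dr\neq 0$ there, so $r$ is a genuine defining function.

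The main obstacle I expect is the careful handling of the density transformation and the extension argument at the boundary. One must be precise that the connection determined by a nowhere vanishing density extends smoothly across $\partial M$ exactly when that density does, and that splitting $\si=r\,\nu$ can be done consistently so that $\nu$ is smooth and nonvanishing up to the boundary while $r$ is a bona fide defining function; the nondegeneracy of $d\si$ along the zero set is essential here and should be invoked explicitly. Since the statement is quoted from Proposition 2.3 of \cite{Proj-comp}, I would present this as a streamlined derivation of the transformation rule for densities together with the two rescaling computations, and defer the finer points of smooth extendability to that reference.
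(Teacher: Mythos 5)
The paper does not prove this lemma itself but quotes it from Proposition 2.3 of \cite{Proj-comp}, so there is no in-text proof to compare against; your route --- encoding volume-preservation as a nowhere vanishing $\nabla$--parallel section $\tau\in\Ga(\Cal E(\al))$, using the rule $\hat\nabla_\xi\tau=\nabla_\xi\tau+\al\,\Ups(\xi)\tau$ on weight--$\al$ densities, and observing that for $\Ups=\tfrac{dr}{\al r}$ the weight $\al$ cancels against the $\al$ in the denominator to leave exactly $d\log r$ --- is the natural one and is essentially the argument of that reference. Your converse direction is correct as stated: from $\nabla\si=0$, $\si=r\nu$ and $\hat\nabla\nu=0$ one reads off $\al\Ups=\tfrac{dr}{r}$, and $\hat\nabla$ is smooth up to the boundary because it is the unique connection in the (extended) projective class preserving the smooth nonvanishing $\nu$.

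The forward direction, however, contains an error that contradicts your own (correct) prescription $\si:=r\cdot\nu$. You assert that ``the $\nabla$--parallel density $\tau$ is \dots equal to $r^{-1}$ times a smooth nonvanishing density'' and later that ``$r^{-\al}$ times the $\hat\nabla$--parallel density is $\nabla$--parallel''. Both are false, and they are inconsistent with each other and with $\si=r\nu$. The computation is: if $\nu$ is $\hat\nabla$--parallel of weight $\al$, then $\nabla_\xi\nu=\hat\nabla_\xi\nu-\al\Ups(\xi)\nu=-\tfrac{\xi\cdot r}{r}\,\nu$, hence $\nabla_\xi(r\nu)=(\xi\cdot r)\nu+r\nabla_\xi\nu=0$; the $\nabla$--parallel density is $r^{+1}\nu$, with exponent $1$ \emph{independently of} $\al$. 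This is the whole point of the lemma: it is what makes $\si$ vanish to first order along $\partial M$ and hence be a defining density. With the exponents $r^{-1}$ or $r^{-\al}$ the candidate section would blow up at the boundary rather than cut it out, so the step as written fails; the fix is the one-line computation above. Two smaller points: on the $(n+1)$--dimensional $\barm$ the volume density bundle is $\Cal E(-n-2)$, not $\Cal E(-n-1)$ (the trace coming from \eqref{proj-def} contributes $n+2$); and since the defining functions $r$ are only local, the global defining density should be taken to be $\tau$ itself, noting that on each boundary chart $\tau$ and $r\nu$ are both $\nabla$--parallel nonvanishing sections of a line bundle over a connected set and hence constant multiples of one another, so the local smooth extensions glue.
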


\subsection{Extension of the projective structure}\label{2.2}
Before we move to the main subject of the article, we prove a result
showing that the condition that the projective class of a linear
connection $\nabla$ on $TM$ admits a smooth extension to $\barm$ can
be easily verified explicitly. To the best of our knowledge this
result is not in the literature, although it is related to the
coordinate based strategy for constructing examples in \cite{sachs}
(see expression (2) there).

Given a linear connection $\nabla$ on $TM$ and a chart $U$ with local
coordinates $x^0,\dots,x^n$ for $\barm$, we obtain the connection
coefficients (or Christoffel symbols) $\Ga^k_{ij}\in C^\infty(U\cap
M,\Bbb R)$ for $\nabla$. Denoting by
$\partial_i:=\tfrac{\partial}{\partial x^i}$ the coordinate vector
fields determined by the chart, the connection coefficients are
characterized by
$\nabla_{\partial_i}\partial_j=\sum_k\Ga^k_{ij}\partial_k$, so they
are symmetric in $i$ and $j$. Fixing the local coordinates, the
connections coefficients may be viewed as giving the contorsion tensor
that distinguishes $\nabla$ from the flat connection determined by the
coordinate frame.  
We can thus form the
trace of the connection coefficients $\ga_i:=\sum_k\Ga^k_{ik}$, as
well as form the tracefree part
$\Ps^k_{ij}:=\Ga^k_{ij}-\tfrac1{n+2}(\ga_i\delta^k_j+\ga_j\delta^k_i)$.
Both $\ga_i$ and $ \Ps^k_{ij}$ are smooth, real valued functions on
$U\cap M$ for all $i,j,k=0,\dots,n$.

\begin{prop}\label{prop-extend}
Let $\barm$ be a smooth manifold of dimension $n+1$, with boundary
$\partial M$ and interior $M$, and let $\nabla$ be a linear connection
on $TM$. 

Then the projective class determined by $\nabla$ admits a smooth
extension to $\barm$ if and only if for any point $x\in\partial M$,
there is a local chart $U$ for $\barm$, with $x\in U$, such that the
components of the tracefree part of the connection coefficients of
$\nabla$, with respect to the local coordinates determined by $U$, admit
a smooth extension from $U\cap M$ to all of $U$. 
\end{prop}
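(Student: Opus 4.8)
The plan is to work entirely in local coordinates and exploit the defining formula \eqref{proj-def} for projective equivalence. Fix a boundary point $x\in\partial M$ and a chart $U$ with coordinates $x^0,\dots,x^n$. The key observation is that projective equivalence changes the connection coefficients in a very controlled way: if $\hat\nabla_\xi\eta=\nabla_\xi\eta+\Ups(\xi)\eta+\Ups(\eta)\xi$, then writing $\Ups_i$ for the components of $\Ups$ in the coordinate coframe, the Christoffel symbols transform as $\hat\Ga^k_{ij}=\Ga^k_{ij}+\Ups_i\delta^k_j+\Ups_j\delta^k_i$. The decisive point is that this modification is \emph{pure trace} in the appropriate sense: it lies entirely in the subspace spanned by the two Kronecker-delta terms, and therefore the tracefree part $\Ps^k_{ij}$ is a \emph{projective invariant}. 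I would verify this by a direct computation: taking the trace of the transformation rule gives $\hat\ga_i=\ga_i+(n+2)\Ups_i$, and substituting back shows that the tracefree combination $\Ga^k_{ij}-\tfrac1{n+2}(\ga_i\delta^k_j+\ga_j\delta^k_i)$ is literally unchanged under $\nabla\mapsto\hat\nabla$. This is the conceptual heart of the proposition and should be the first step.

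With the invariance of $\Ps^k_{ij}$ established, both directions of the equivalence follow. For the ``only if'' direction, suppose the projective class extends to $\barm$. Then near $x$ there is a one-form $\Ups\in\Om^1(U\cap M)$ (which, since it is defined via smooth vector fields on $U$, extends smoothly to the boundary) such that $\hat\nabla$ in \eqref{proj-def} extends smoothly to $U$. The extended connection $\hat\nabla$ has smooth Christoffel symbols $\hat\Ga^k_{ij}$ on all of $U$, so its tracefree part $\hat\Ps^k_{ij}$ is smooth up to the boundary. But by the invariance just proved, $\Ps^k_{ij}=\hat\Ps^k_{ij}$ on $U\cap M$, and hence $\Ps^k_{ij}$ itself extends smoothly. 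For the ``if'' direction, suppose the $\Ps^k_{ij}$ of $\nabla$ extend smoothly to $U$. The trace $\ga_i$ need not extend, but its failure to extend is exactly what we can absorb: the plan is to choose $\Ups$ so that the new trace $\hat\ga_i=\ga_i+(n+2)\Ups_i$ becomes smooth, e.g.\ by setting $\Ups_i:=-\tfrac1{n+2}\ga_i$ on $U\cap M$ and then correcting to a genuinely smooth form, or more simply by choosing $\Ups$ to be any smooth one-form on $U$ and checking that the resulting $\hat\Ga^k_{ij}=\Ps^k_{ij}+\tfrac1{n+2}(\ga_i\delta^k_j+\ga_j\delta^k_i)+\Ups_i\delta^k_j+\Ups_j\delta^k_i$ can be made smooth. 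Concretely, one takes a smooth one-form $\Ups$ on $U$ whose boundary behavior cancels the singular part of $\ga_i$; reconstructing $\hat\Ga^k_{ij}$ from the smooth $\Ps^k_{ij}$ and the smooth trace $\hat\ga_i$ then exhibits $\hat\nabla$ as a smooth extension of a projectively equivalent connection.

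The one subtlety I anticipate, and the step that deserves the most care, is the ``if'' direction: one must produce an honest one-form $\Ups$, smooth on $U\cap M$ and compatible with the smooth-extension requirement of \eqref{proj-def}, that simultaneously (i) repairs the trace and (ii) is consistent with the decomposition $\hat\Ga^k_{ij}=\Ps^k_{ij}+\tfrac1{n+2}(\hat\ga_i\delta^k_j+\hat\ga_j\delta^k_i)$. The potential obstacle is that the definition of projective extension in Section~\ref{2.2} requires $\Ups$ to arise from smooth vector fields on $U$, whereas the natural candidate $\Ups_i=-\tfrac1{n+2}\ga_i$ is only defined on the interior and may itself be singular. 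The resolution is to note that we are free to add any smooth-up-to-boundary one-form to $\Ups$ without affecting the argument, and that the hypothesis is symmetric: what matters is that \emph{some} smooth extension of $\hat\nabla$ exists, which the smoothness of $\Ps^k_{ij}$ guarantees once the trace is handled. I would therefore verify carefully that the constructed $\hat\nabla$ is indeed a linear connection (its $\hat\Ga^k_{ij}$ being smooth symmetric functions on $U$) and that the difference $\hat\nabla-\nabla$ has the form prescribed by \eqref{proj-def} for the chosen $\Ups$, completing the equivalence.
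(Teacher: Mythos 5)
Your overall strategy is exactly the paper's: the transformation rule $\hat\Ga^k_{ij}=\Ga^k_{ij}+\Ups_i\delta^k_j+\Ups_j\delta^k_i$ shows that the tracefree part $\Ps^k_{ij}$ is projectively invariant, which gives the forward direction, and for the converse one absorbs the trace by a projective change. The forward direction as you present it is correct and matches the paper.

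The issue is in your handling of the converse, where you manufacture a difficulty that is not there and then ``resolve'' it in a way that would not actually work. The definition of projective extension in Section \ref{2.2} requires only that $\Ups\in\Om^1(U\cap M)$; it is the vector fields $\xi,\eta$ that must be smooth up to the boundary, not $\Ups$. So the natural candidate $\Ups_i=-\tfrac1{n+2}\ga_i$, defined only on the interior and possibly singular at $\partial M$, is perfectly admissible. With this choice $\hat\Ga^k_{ij}=\Ps^k_{ij}$, which extends smoothly to $U$ by hypothesis, and you are done --- this is precisely the paper's argument, phrased there as: define a connection $\hat\nabla$ on $U$ by declaring its Christoffel symbols to be $\Ps^k_{ij}$, and observe that its restriction to $U\cap M$ is projectively equivalent to $\nabla$. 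By contrast, your alternative resolution --- ``choosing $\Ups$ to be any smooth one-form on $U$ whose boundary behavior cancels the singular part of $\ga_i$'' --- is impossible whenever $\ga_i$ genuinely fails to extend: adding something smooth up to the boundary cannot cancel a singularity. Drop the attempted ``correction to a genuinely smooth form,'' keep $\Ups_i=-\tfrac1{n+2}\ga_i$ as is, and your proof is complete and coincides with the paper's.
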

\begin{proof}
The fact that a linear connection $\hat\nabla$ on $TM$ admits a smooth
extension to all of $\barm$ is clearly equivalent to the fact that its
connection coefficients in any local chart admit a smooth extension to
the whole domain of the chart. This in turn is equivalent to the same
fact in at least one local chart around each boundary point. Now
suppose that $\nabla$ and $\hat\nabla$ are projectively equivalent as
in \eqref{proj-def} and $\Ups$ is the corresponding one--form. Then
the connection coefficients in a chart $U$ are clearly related by
$$
\hat\Ga^k_{ij}=\Ga^k_{ij}+\Ups_i\delta^k_j+\Ups_j\delta^k_i, 
$$ where $\Ups=\sum_i\Ups_idx^i$. In particular, the tracfree parts of
$\hat\Ga^k_{ij}$ and $\Ga^k_{ij}$ agree. So if the projective class of
$\nabla$ admits a smooth extension to $\barm$, the tracfree parts of
its connection coefficients, with respect to any local chart for
$\barm$ that includes a boundary point, admit a smooth extension to
the boundary.

Conversely, assume that $x\in\partial M$ is a boundary point, and $U$
is a local chart for $\barm$ that contains $x$, and on which the tracefree
parts of the connection coefficients for $\nabla$ admit a smooth
extension to the boundary. Let us denote by $\Ga^i_{jk}$ these
connection coefficients, by $\ga_i$  their trace, and by $\Ps^k_{ij}$ their
tracefree part, as defined above. 
As mentioned above, we may interpret the connection coefficients
$\Ga^i_{jk}$ as the coordinate components of the contorsion tensor
needed to modify the flat connection determined by the chart to the
connection $\nabla$. Hence we can interpret the components
$\Ps^k_{ij}$ of the tracefree part in exactly the same way,
i.e.~define a connection $\hat\nabla$ by
$\hat\nabla_{\partial_i}\partial_j:=\sum_k\Ps^k_{ij}\partial_k$ and by
assumption, this admits a smooth extension to all of $U$. But the fact
that 
$\Ps^k_{ij}:=\Ga^k_{ij}-\tfrac1{n+2}(\ga_i\delta^k_j+\ga_j\delta^k_i)$
on $U\cap M$ shows that the restriction of $\hat\nabla$ to $U\cap M$
is projectively equivalent to $\nabla$.
\end{proof}

\subsection{Extension of the scalar curvature via projective tractors}
\label{2.3}
We now move to the main topic of our article. Consider a
pseudo--Riemannian metric $g$ on $M$ with Levi--Civita connection
$\nabla$. Our standing assumption will be that $\nabla$ does not
extend to any neighborhood of a boundary point. This is for example
implied by completeness of $g$, since this is defined as geodesic
completeness of $\nabla$. Starting from here, we will not use local
coordinates any more and indices showing up will be abstract indices
as introduced by R.~Penrose. In particular, we will denote the metric
$g$ by $g_{ij}$ and its inverse by $g^{ij}$. The Riemann curvature
tensor of $g$ will be denoted by $R_{ij}{}^k{}_\ell$, the
Ricci--curvature of $g$ is $\Ric_{ij}=R_{ki}{}^k{}_j$ 
and its scalar
curvature is $S:=g^{ij}\Ric_{ij}$. We will also use the projective
Schouten tensor $\Rho_{ij}$, which in this simple setting satisfies
$\Rho_{ij}=\tfrac1n\Ric_{ij}$ and its trace
$\Rho=g^{ij}\Rho_{ij}=\tfrac1n S$.

In general, an affine connection is not projectively equivalent to the
Levi--Civita connection of any pseudo--Riemannian metric. The fact
that a projective class contains a Levi--Civita connection is
equivalent to the existence of a non--degenerate solution of a certain
projectively invariant differential equation \cite{Mikes,Sinjukov},
which is sometimes referred to as the metricity equation. The details
of the equation are not important to us, but a discussion in the
notation used here may be found in \cite{CGM,Eastwood-Matveev}.  Let
us briefly discuss some implications of the existence of the solution
to the metricity equation determined by $g$, as well as its
interpretation in terms of tractor bundles.

A projective structure on an $(n+1)$-manifold canonically determines a
invariant linear connection $\nabla^{\mathcal{T}}$. This {\em normal tractor
  connection} is not defined on the tangent bundle but rather on a
rank $(n+2)$-vector bundle known as the {\em standard tractor bundle}
$\Cal T$, see \cite{BEG}.  We write $\Cal T^*$ for its dual, and
$S^2\Cal T$ and $S^2\Cal T^*$ for the symmetric squares of these
bundles. Each of these bundles comes with a composition series in terms
of weighted tensor bundles, which we write as
\begin{equation}\label{comp-ser}
\begin{gathered}
\Cal T=\Cal E(-1)\rpl\Cal E^i(-1) \qquad S^2\Cal T=\Cal E(-2)\rpl\Cal
E^i(-2)\rpl \Cal E^{(ij)}(-2)\\ 
\Cal T^*=\Cal E_i(1)\rpl\Cal E(1) \qquad S^2\Cal T^*=\Cal
E_{(ij)}(2)\rpl\Cal E_i(2)\rpl\Cal E(2) 
\end{gathered}
\end{equation}
Here we use the usual conventions of abstract index notation for
tensor bundles, as well as the convention that adding ``$(w)$'' to the
name of a bundle indicates a tensor product with $\Cal E(w)$. The
composition series for $S^2\Cal T$, for example, means that there are
smooth subbundles $\Cal F_1\subset\Cal F_2\subset S^2\Cal T$ such that
$\Cal F_1\cong\Cal E(-2)$, $\Cal F_2/\Cal F_1\cong \Cal E^i(-2)$ and
$S^2\Cal T/\Cal F_2\cong\Cal E^{(ij)}(-2)$. Choosing a connection in
the projective class, one obtains an isomorphism between each of the
tractor bundles and the direct sum of its composition factors. Having
made such a choice, elements and sections of the bundles will be
denoted as pairs and triples according to the splittings. This
notation is chosen in such a way that, for the composition series
\eqref{comp-ser}, the projecting slot is on top, while the injecting
slot is in the bottom. There are explicit formulae in \cite{BEG} and
\cite{Proj-comp} for the relation between the splittings corresponding
to different connections in the projective class.  See also \cite{EastwoodNotes}
for a further introduction to projective tractor calculus.

The normal tractor connection on $\mathcal{T}$ induces linear
connections on the other tractor bundles, for which we use analogous
notation. Again, explicit formulae for these tractor connections are
available in the references cited above. They are again {\em normal}
meaning that they are associated to the normal conformal Cartan
connection, but below we will usually omit the word ``normal'' except
for where we also consider another connection on the same bundle. 

Now given the metric $g$ on $M$, we denote by $\vol(g)$ its volume
density and we write $\tau:=\vol(g)^{-\frac{2}{n+2}}$, which is a
section of $\Cal E(2)$ defined over $M$ and nowhere vanishing
there. In particular, $\tau^{-1}g^{ij}\in\Ga(\Cal E^{(ij)}(-2))$ is
well defined over $M$, and this is the solution to the metricity
equation determined by $g$. The crucial fact for our purposes is that
there is a corresponding section $L(\tau^{-1}g^{ij})$ of $S^2\Cal T$,
which projects onto $\tau^{-1}g^{ij}$ under the canonical projection,
see \cite{Eastwood-Matveev} or \cite[Proposition 3.1]{CGM}.  The fact
that $\tau^{-1}g^{ij}$ satisfies the metricity equation can be
equivalently characterized as $L(\tau^{-1}g^{ij})$ being parallel for
a natural modification $\nabla^p$ of the tractor connection
$\nabla^{S^2\Cal T}$.  The connection $\nabla^p$ was first constructed in
\cite{Eastwood-Matveev}; a general construction for arbitrary first
BGG--operators is available in \cite{HSSS}. The formula for $\nabla^p$, in the conventions for the splitting we will use below
(which are slightly different from the those in \cite{Eastwood-Matveev}),
is given in Theorem 4.1 of \cite{CGM}. We will not need the detailed
formula, however.

\begin{prop}\label{prop:extend}
Let $g=g_{ij}$ be a pseudo--Riemannian metric on $M$ such that the
projective structure determined by the Levi--Civita connection
$\nabla$ of $g$ admits a smooth extension to all of $\barm$. 

Then the sections $\tau^{-1}g^{ij}\in\Ga(\Cal E^{(ij)}(-2))$, and
$L(\tau^{-1}g^{ij})\in\Ga(S^2\Cal T)$, as well as the scalar curvature
$S$ of $g$ admit smooth extensions to all of $\barm$.
\end{prop}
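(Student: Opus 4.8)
The plan is to leverage Proposition~\ref{prop-extend} together with the tractor-theoretic characterization of the metricity equation. First I would fix a boundary point $x\in\partial M$ and invoke Proposition~\ref{prop-extend} to obtain a local chart $U$ around $x$ in which the tracefree parts $\Ps^k_{ij}$ of the Christoffel symbols of $\nabla$ extend smoothly to the boundary. The strategy then splits into three extension claims, handled in the order listed in the statement: first $\tau^{-1}g^{ij}$, then the full tractor $L(\tau^{-1}g^{ij})$, and finally the scalar curvature $S$. The key conceptual point is that while $\nabla$ itself does \emph{not} extend (by our standing assumption), the whole projective package---the extended projective structure and its associated normal tractor connection $\nabla^{\Cal T}$ on $\Cal T$, hence the induced connection $\nabla^{S^2\Cal T}$ and the modified connection $\nabla^p$---\emph{does} extend smoothly to all of $U$, since the tractor connections are built naturally from the extended projective structure.

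For the first claim I would argue that $\tau^{-1}g^{ij}$, being the canonical solution of the (projectively invariant) metricity equation determined by $g$, is characterized by $L(\tau^{-1}g^{ij})$ being parallel for $\nabla^p$. Because $\nabla^p$ extends smoothly over $U$ and parallel transport for a smooth connection produces smooth sections, the idea is to show that $L(\tau^{-1}g^{ij})$ extends. The natural approach is to pick any point $y\in U\cap M$, take the value of $L(\tau^{-1}g^{ij})$ there, and parallel-transport it with the extended $\nabla^p$; since this section is already $\nabla^p$-parallel on the interior, it must agree with this parallel extension on $U\cap M$, and the parallel extension is smooth up to the boundary. This simultaneously yields the smooth extension of $L(\tau^{-1}g^{ij})\in\Ga(S^2\Cal T)$ (the second claim). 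Projecting via the canonical bundle map $S^2\Cal T\to\Cal E^{(ij)}(-2)$, which is a natural smooth morphism over $U$, then gives the smooth extension of $\tau^{-1}g^{ij}$ (the first claim).

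The main obstacle I anticipate is the last claim, extending the scalar curvature $S$. The difficulty is that $S$ is \emph{a priori} defined only via $\nabla$, which does not extend, so one cannot naively read off its boundary behavior. The plan is to express $S$ in terms of the extended data. Recall $\Rho=\tfrac1n S$ and the projective Schouten tensor $\Rho_{ij}$, and that the bottom (injecting) slot of the parallel tractor $L(\tau^{-1}g^{ij})$, in the splitting determined by $\nabla$, should encode a scalar built from $\tau$ and $\Rho$---concretely, a component of the form involving $\tau^{-1}$ and the trace $\Rho$ of the Schouten tensor. Since $L(\tau^{-1}g^{ij})$ and its top slot $\tau^{-1}g^{ij}$ both extend smoothly, and since $\tau$ is the defining density data that behaves controllably, the remaining slots---and in particular the scalar curvature contribution---are forced to extend as well. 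The delicate step is to track exactly how $S$ enters the lower composition factors of $L(\tau^{-1}g^{ij})$ relative to a connection in the extended projective class (rather than $\nabla$ itself), and to verify that the combination isolating $S$ is a smooth function of quantities already known to extend. This is where the ``way this works'' flagged in the introduction becomes essential, and I expect it to require the explicit formula for $\nabla^p$ from Theorem~4.1 of \cite{CGM}, or at least its relevant components, to pin down the coefficient of the scalar curvature.
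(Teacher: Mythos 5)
Your argument for the first two extensions coincides with the paper's: the modified connection $\nabla^p$ is defined on all of $\barm$ because the projective structure is, the $\nabla^p$--parallel section $L(\tau^{-1}g^{ij})$ extends by parallel transport, and projecting to the quotient $\Cal E^{(ij)}(-2)$ extends $\tau^{-1}g^{ij}$. (The appeal to Proposition \ref{prop-extend} is superfluous here, since extendability of the projective structure is already a hypothesis.)

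For the scalar curvature, however, your sketch has a genuine gap: you do not identify the invariant quantity that isolates $S$, and the route you indicate --- reading $S$ off from the lower slots of the extended tractor $L(\tau^{-1}g^{ij})$ --- would not go through as described. In the splitting determined by $\nabla$ the bottom slot is $\tfrac1{n+1}\tau^{-1}g^{ij}\Rho_{ij}=\tfrac1{n(n+1)}\tau^{-1}S$, i.e.\ it carries $S$ only in the combination $\tau^{-1}S$; since $\tau$ in fact tends to zero at $\partial M$ (this is established later, in the proof of Theorem \ref{thm:main}), knowing that the slots of the extended tractor are smooth up to the boundary does not by itself show that $S$ extends. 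Moreover, only the top (projecting) slot is splitting--independent; the bottom slot relative to a connection $\tilde\nabla$ that is smooth up to the boundary differs from the one above by terms involving the one--form $\Ups$ relating $\nabla$ and $\tilde\nabla$, which itself need not extend, so ``the remaining slots are forced to extend'' is not a valid inference, and in any case would not disentangle $S$ from $\tau^{-1}$. The paper's actual device is different and is the key idea you are missing: regard $L(\tau^{-1}g^{ij})$ as a bundle metric on $\Cal T^*$ and take its determinant, which is a canonically defined section of $(\La^{n+2}\Cal T^*)^2$; this line bundle is trivialized by the (flat) tractor connection, so the determinant is, up to an overall constant, a smooth \emph{function} on all of $\barm$. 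Computing it over $M$ in the $\nabla$--splitting using \eqref{h-form} gives $\tau^{-n-2}\det(g^{ij})\,\tfrac1{n+1}g^{ij}\Rho_{ij}$, and the normalization $\tau=\vol(g)^{-\frac2{n+2}}$ makes the factor $\tau^{-n-2}\det(g^{ij})$ identically $1$, so the determinant is a nonzero constant multiple of $S$. No explicit formula for $\nabla^p$ is needed.
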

\begin{proof}
Since we have a well-defined projective structure on $\barm$, all
tractor bundles and tractor connections, as well as the natural
modification $\nabla^p$ of the tractor connection on $S^2\Cal TM$,
are also well-defined on all of $\barm$. By assumption, $L(\tau^{-1}g^{ij})$
is a section of $S^2\Cal T$ defined on the dense open subset
$M\subset\barm$ and parallel for the connection $\nabla^p$
there. Hence it can be extended by parallel transport to a smooth
parallel section over all of $\barm$. The projection of this extension
to the quotient bundle $\Cal E^{(ij)}(-2)$ then provides the claimed
extension of $\tau^{-1}g^{ij}$. 

We can view $L(\tau^{-1}g^{ij})$ as a bundle metric on $\Cal
T^*$. Taking the determinant of of its coefficient--matrix, with
respect to a local frame of $\Cal T^*$, gives rise to a well--defined
section of the bundle $(\La^{n+2}\Cal T^*)^2$. Now this line bundle is
always trivial, with the tractor connection inducing a flat connection
on it. Hence there is a well defined (up to an overall non--zero
constant) determinant of $L(\tau^{-1}g^{ij})$ which is a smooth
function on $\barm$.

Restricting to $M$, we can work in the splittings of $S^2\Cal T$
determined by the Levi--Civita connection $\nabla$. In this splitting,
it is easy to describe $L(\tau^{-1}g^{ij})$ explicitly, since both
$\tau^{-1}$ and $g^{ij}$ are parallel for $\nabla$, see Theorem 3.3 of
\cite{CGM}. One gets
\begin{equation}\label{h-form}
L(\tau^{-1}g^{ij})=
\begin{pmatrix}
  \tau^{-1}g^{ij} \\ 0\\ \tfrac1{n+1}\tau^{-1}g^{ij}\Rho_{ij}
\end{pmatrix}
\end{equation}
 in this splitting. From this one reads off that, over $M$,
$$
\det(L(\tau^{-1}g^{ij}))=\tau^{-n-2}\det(g^{ij})\tfrac1{n+1}g^{ij}\Rho_{ij}.
$$ 
By the definition of $\tau$ we have $\tau^{-n-2}\det(g^{ij})=1$. So,
up to a constant factor, $\det(L(\tau^{-1}g^{ij}))$ equals $S$ on $M$ and  
provides the claimed smooth extension of $S$ to $\barm$.
\end{proof}

\subsection{The non--degenerate case}\label{2.4}
Let $g$ be a pseudo--Riemannian metric on $M$ such that the projective
structure defined by the Levi--Civita connection $\nabla$ of $g$
admits a smooth extension to $\barm$. Then from Proposition
\ref{prop:extend}, we know that the scalar curvature $S$ of $g$ admits
a smooth extension to $\barm$. In what follows, we will primarily be
interested in the case that the resulting boundary value is nowhere
vanishing. This has to happen if, for example,  $S$ is bounded away
from zero on $M$. The proof of Proposition \ref{prop:extend} also
gives a conceptual explanation for the relevance of this condition. We
have seen that $S$ arises as the determinant of the section
$L(\tau^{-1}g^{ij})\in\Ga(S^2\Cal TM)$ associated to the solution
$\tau^{-1}g^{ij}$ of the metricity equation determined by $g$. Hence
our condition exactly means that $L(\tau^{-1}g^{ij})$ is
non--degenerate as a bilinear form on the standard cotractor bundle
along $\partial M$ and hence locally around $\partial M$. 

If this condition is satisfied, we can  consider the pointwise
inverse $\Ph$ of $L(\tau^{-1}g^{ij})$, which is a smooth section of
$S^2\Cal T^*$. In the proof of Proposition \ref{prop:extend}, we have
noted that there is a natural modification $\nabla^p$ of the normal
tractor connection on $S^2\Cal T$ for which $L(\tau^{-1}g^{ij})$ is
parallel. Now the normal tractor connections are all induced by the
standard tractor connection on $\Cal T$, so there is a simple relation
between the derivatives of $\Ph$ and of $L(\tau^{-1}g^{ij})$ with
respect to the tractor connections. We can interpret the fact that
$L(\tau^{-1}g^{ij})$ is parallel for $\nabla^p$ as giving a formula
for the derivative with respect to the tractor connection. From this,
we conclude that $\Ph$ satisfies a differential equation, which will
lead to the main result.

Having chosen a connection $\tilde\nabla$ in the projective class, the
section $\Ph\in\Ga(S^2\Cal T^*)$ has the form
\begin{equation}\label{Phi-form}
\begin{pmatrix} \si \\ \mu_i \\ \ps_{jk}\end{pmatrix} \quad
\text{with\ } \si\in\Ga(\Cal E(2)), \mu_i\in\Ga(\Cal E_i(2)),
\ps_{ij}\in\Ga(\Cal E_{(ij)}(2)). 
\end{equation}
\begin{prop}\label{prop:Phi-equ}
Suppose that the pointwise inverse $\Ph$ of $L(\tau^{-1}g^{ij})$ is
given by \eqref{Phi-form}, in the splitting determined by a connection $\tilde\nabla$
in the projective class. Then we have 
$$
\tilde\nabla_i\si=2\mu_i-2\si\mu_j\xi^j_i-\si^2\eta_i
$$ for certain sections $\xi_i^j\in\Ga(\Cal E_i^j(-2))$ and
$\eta_i\in\Ga(\Cal E_i(-2))$.
\end{prop}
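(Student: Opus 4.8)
The plan is to use that $\Ph$ is the pointwise inverse of $L:=L(\tau^{-1}g^{ij})$, viewed as mutually inverse bundle maps $L\colon\Cal T^*\to\Cal T$ and $\Ph\colon\Cal T\to\Cal T^*$, together with the fact recorded above that $L$ is parallel for the modified connection $\nabla^p$. Because the normal tractor connections on $\Cal T$ and $\Cal T^*$ are mutually dual, differentiating $\Ph\o L=\id$ yields the standard rule for the derivative of an inverse,
$$
\nabla^{S^2\Cal T^*}_i\Ph=-\Ph\o(\nabla^{S^2\Cal T}_iL)\o\Ph .
$$
Writing $\nabla^p=\nabla^{S^2\Cal T}+\Theta$ for the algebraic modification, the hypothesis $\nabla^p_iL=0$ gives $\nabla^{S^2\Cal T}_iL=-\Theta_i(L)$, so the right hand side is purely algebraic in $L$, $\Ph$ and $\nabla^{S^2\Cal T}_iL$. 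The claimed identity should then follow by comparing the projecting (top) components of the two sides in the splitting determined by $\tilde\nabla$.

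First I would compute the projecting slot of the left hand side. From the explicit form of the cotractor connection in a splitting, the projecting component of $\nabla^{S^2\Cal T^*}_i$ applied to a section $(\si,\mu_j,\ps_{jk})$ is $\tilde\nabla_i\si-2\mu_i$: only the injector term of the cotractor connection feeds into the top slot, so neither $\ps_{jk}$ nor the Schouten tensor $\Rho_{ij}$ appears, and the factor $2$ is the one produced by the symmetric product. For the right hand side I would recover the projecting slot by inserting the canonical injector $\Cal E(-1)\into\Cal T$ twice; since inserting it once into $\Ph$ produces the cotractor with components $(\si,\mu_j)$, the top slot of $-\Ph\o(\nabla^{S^2\Cal T}_iL)\o\Ph$ is exactly the value of the quadratic form $-\nabla^{S^2\Cal T}_iL$ on this cotractor. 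Expanding that quadratic form in the splitting yields three contributions, paired respectively with the projecting, middle and injecting slots of $\nabla^{S^2\Cal T}_iL$: a term in $\mu_j\mu_k$, a term in $\si\mu_j$, and a term in $\si^2$.

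The decisive point is that the term quadratic in $\mu$ must drop out, i.e.\ that the projecting slot of $\nabla^{S^2\Cal T}_iL$ vanishes. This is where the structure of the prolongation is used: the modification $\Theta=\nabla^p-\nabla^{S^2\Cal T}$ takes values in the lower part of the filtration of $S^2\Cal T$, hence does not contribute to the projecting component, so the projecting slot of $\nabla^{S^2\Cal T}_iL$ agrees with that of $\nabla^p_iL=0$ and is zero (this is the tractorial form of the metricity equation for $\tau^{-1}g^{ij}$). Granting this, the two surviving contributions give $\tilde\nabla_i\si-2\mu_i=-2\si\mu_j\xi^j_i-\si^2\eta_i$, where $\xi^j_i$ and $\eta_i$ are, up to normalisation, the middle and injecting slots of $\nabla^{S^2\Cal T}_iL$; a weight count then confirms $\xi^j_i\in\Ga(\Cal E^j_i(-2))$ and $\eta_i\in\Ga(\Cal E_i(-2))$, as required. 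I expect the main obstacle to be precisely this vanishing of the projecting slot — that is, checking that the modification $\Theta$ really leaves the top slot untouched — together with keeping the splitting conventions and the numerical factors (notably the coefficient $2$ of $\mu_i$) consistent throughout.
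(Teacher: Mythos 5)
Your argument is correct and follows essentially the same route as the paper: differentiate the inverse relation $\Ph_{AC}h^{CB}=\delta^A_B$, observe that the prolongation modification leaves the projecting slot of $\nabla^{S^2\Cal T}_iL$ untouched so that slot vanishes, and compare projecting components by inserting the image of $\Cal E(-1)\into\Cal T$ under $\Ph$ into the bilinear form $\nabla^{S^2\Cal T}_iL$. The signs you worry about are immaterial since $\xi^j_i$ and $\eta_i$ are only asserted to be \emph{certain} sections of the indicated bundles.
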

\begin{proof}
During this proof, we will denote all the normal tractor connections
by $\nabla^{\Cal T}$. The only fact about the prolongation connection
$\nabla^p$ we need here is that the modification from $\nabla^{\Cal
  T}$ does not affect the projecting component $\Cal
E^{(ij)}(-2)$, see \cite{Eastwood-Matveev} or \cite{CGM}. 
Otherwise put, writing $\nabla^{\Cal
  T}_iL(\tau^{-1}g^{jk})$ in the given splitting, we get $0$ in the
top slot, and some elements $\xi_i^j\in\Ga(\Cal E_i^j(-2))$ and
$\eta_i\in\Gamma(\Cal E_i(-2))$ in the other two slots. (These can be
described explicitly, see Theorem 4.1 of \cite{CGM}, but we don't need
any details here.)

To describe the relations between the tractor derivatives of
$L(\tau^{-1}g^{ij})$ and $\Ph$, we briefly use abstract index notation
for tractors, writing $h^{AB}$ for $L(\tau^{-1}g^{ij})$ and $\Ph_{AB}$
for $\Ph$. Then by definition
$\Ph_{AC}h^{CB}=\delta^A_B$. Differentiating this, we get
$h^{CB}\nabla^{\Cal T}_i\Ph_{AC}=-\Ph_{AC}\nabla^{\Cal T}_ih^{CB}$,
which immediately implies that
$$
\nabla^{\Cal  T}_i\Ph_{AB}=-\Ph_{AC}\Ph_{DB}\nabla^{\Cal T}_ih^{CD}. 
$$ Now the projecting slot of $\nabla^{\Cal T}_i\Ph$ in the given
splitting is $\tilde\nabla_i\si-2\mu_i$, see Section 3.1 of
\cite{Proj-comp}. To compute the projecting slot of the right hand
side, we can take two sections of $\Cal E(-1)\subset\Cal T$, convert
them to sections of $\Cal T^*$ using $\Ph$, and then hook them into the
bilinear form defined by $\nabla_iL(\tau^{-1}g^{ij})$. One easily
verifies that the bundle map $\Cal T\to\Cal T^*$ induced by $\Ph$
sends an element of the form $\binom0\alpha$ to
$\binom{\al\si}{\al\mu_i}$. Hooking this into the bilinear form, one
immediately sees that the projecting slot of the right hand side
equals $2\xi_i^j\mu_j\si+\eta_i\si^2$, which implies the
claim. 
\end{proof}

\subsection{The main result}\label{2.5}
Now we are ready to prove our main result. 

\begin{thm}\label{thm:main}
Let $\barm$ be a smooth manifold of dimension $n+1$ with boundary
$\partial M$ and interior $M$. Let $g=g_{ij}$ be a pseudo--Riemannian
metric on $M$ with Levi--Civita connection $\nabla$, which has the
following properties.
\begin{itemize}
\item The projective structure on $M$ defined by $\nabla$ admits a
  smooth extension to $\barm$. 
\item The connection $\nabla$ itself does not admit a smooth extension
  to any neighborhood of a boundary point. 
\item The scalar curvature $S$ of $g$ is bounded away from zero, or,
  more generally, the boundary value of the smooth extension of $S$ to
  $\barm$, as guaranteed by Proposition \ref{prop:extend}, is nowhere
  vanishing.
\end{itemize}
Then $g$ is projectively compact of order $\al=2$. 
\end{thm}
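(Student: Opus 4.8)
The plan is to invoke Lemma \ref{lem2.1}: since $\nabla$ preserves the volume density $\vol(g)$, it suffices to exhibit a $\nabla$--parallel defining density of weight $2$ for $\partial M$. The natural candidate is $\tau=\vol(g)^{-2/(n+2)}\in\Ga(\Cal E(2))$ itself, which is by construction $\nabla$--parallel and nowhere vanishing on $M$. Thus the whole theorem reduces to three claims about this fixed $\tau$: that it extends smoothly to $\barm$, that it vanishes on $\partial M$, and that its differential is nonzero along $\partial M$.

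First I would convert the hypothesis on $S$ into non--degeneracy. By the computation in the proof of Proposition \ref{prop:extend}, $\det(L(\tau^{-1}g^{ij}))$ is a nonzero constant multiple of $S$, so the assumption that $S$ is nowhere zero on $\partial M$ makes $h:=L(\tau^{-1}g^{ij})$ a non--degenerate bilinear form on $\Cal T^*$ in a neighborhood of $\partial M$. Hence its pointwise inverse $\Ph\in\Ga(S^2\Cal T^*)$ extends smoothly to $\barm$, and so does its projecting part $\si=\Ph_{AB}X^AX^B\in\Ga(\Cal E(2))$, where $X^A$ is the canonical tractor; this is the top slot $\si$ of \eqref{Phi-form} in any splitting. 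Computing in the Levi--Civita splitting, where by \eqref{h-form} the form $h$ is block diagonal with scalar slot $\tfrac1{n+1}\tau^{-1}\Rho$, one reads off $\si=\tfrac{n(n+1)}{S}\tau$, equivalently $\tau=\tfrac1{n(n+1)}S\si$. Since $S$ and $\si$ both extend smoothly and $S$ is nowhere zero, this displays $\tau$ as a smooth section over all of $\barm$.

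It then remains to check the two boundary conditions. For the vanishing $\tau|_{\partial M}=0$ I would argue by contradiction using non--extendability of $\nabla$: if $\tau(x)\neq0$ for some $x\in\partial M$, then $\tau$ is invertible near $x$ and $g^{ij}=\tau\cdot(\tau^{-1}g^{ij})$ is a smooth extension of the inverse metric (a product of smooth sections, by Proposition \ref{prop:extend}); the relation $\tau^{-n-2}\det(g^{ij})=1$ established in the proof of Proposition \ref{prop:extend} shows $g^{ij}$ is non--degenerate at $x$, so $g_{ij}$ and hence its Levi--Civita connection extend smoothly across $x$, contradicting the hypothesis that $\nabla$ does not extend. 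Thus $\si=\tfrac{n(n+1)}{S}\tau$ vanishes on all of $\partial M$. For the first--order condition I would use Proposition \ref{prop:Phi-equ}: choosing a $\tilde\nabla$ in the projective class that extends to $\barm$ and restricting $\tilde\nabla_i\si=2\mu_i-2\si\mu_j\xi^j_i-\si^2\eta_i$ to $\partial M$, where $\si=0$, gives $\tilde\nabla_i\si=2\mu_i$ there. Were $\mu_i$ to vanish at a boundary point, then at that point both the projecting slot $\si$ and the middle slot $\mu_i$ of $\Ph$ would be zero, forcing $\Ph_{AB}X^B=0$ and contradicting the non--degeneracy of $\Ph=h^{-1}$. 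Hence $\mu_i\neq0$ and $d\si\neq0$ along $\partial M$, and since $\si=0$ and $S\neq0$ there, also $d\tau=\tfrac1{n(n+1)}S\,d\si\neq0$ on $\partial M$. So $\tau$ is a $\nabla$--parallel defining density of weight $2$, and Lemma \ref{lem2.1} gives projective compactness of order $\al=2$.

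The step I expect to be the main obstacle is the first--order (defining--density) condition. Unlike the mere vanishing of $\tau$, it is not formal: it genuinely requires both the identity of Proposition \ref{prop:Phi-equ} (to identify the normal derivative of $\si$ with the middle slot $\mu_i$ on $\partial M$) and the non--degeneracy of $\Ph$ (to rule out $\mu_i=0$). Everything else is essentially bookkeeping with the composition series \eqref{comp-ser}, once the dictionary ``$S\neq0\Leftrightarrow h$ non--degenerate'' and the identity $\tau=\tfrac1{n(n+1)}S\si$ are in place.
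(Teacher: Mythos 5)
Your proposal is correct and follows essentially the same route as the paper's proof: extend $\tau$ via the projecting slot of $\Ph=L(\tau^{-1}g^{ij})^{-1}$ together with the identity expressing that slot as a multiple of $\tau S^{-1}$, then verify the defining--density conditions using Proposition \ref{prop:Phi-equ} and the non--degeneracy of $\Ph$, and conclude by Lemma \ref{lem2.1}. The only (harmless) deviation is in the step $\tau|_{\partial M}=0$: the paper derives the contradiction with non--extendability of $\nabla$ from the uniqueness of the connection in the projective class preserving the nowhere--vanishing density $\tau$, whereas you reconstruct $g^{ij}=\tau\cdot(\tau^{-1}g^{ij})$ and extend the metric itself across the boundary point; both arguments are valid.
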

\begin{proof}
  As in Section \ref{2.3}, we put $\tau:=\vol(g)^{-\frac{2}{n+2}}$,
  which is a section of $\Cal E(2)$ defined over $M$ and nowhere
  vanishing there. Denoting by $g^{ij}$ the inverse of $g_{ij}$, we
  know from Section \ref{2.3} and Proposition \ref{prop:extend} that
  the section $\tau^{-1}g^{ij}$ of $\Cal E^{ij}(-2)$, which is
  initially defined over $M$, is a solution of the metricity equation
  and admits a smooth extension to $\barm$. Then the corresponding
  section $L(\tau^{-1}g^{ij})\in\Ga(S^2\Cal T)$ is parallel for the
  connection $\nabla^p$.

  In Section \ref{2.4}, we have noted that our assumption on $S$
  implies that, as a bilinear form on $\Cal T^*$, $L(\tau^{-1}g^{ij})$
  is non--degenerate along $\partial M$, and hence locally around
  $\partial M$. In the further considerations, we can restrict to a
  neighborhood of $\partial M$ where this is true, i.e.~we will assume
  that $L(\tau^{-1}g^{ij})$ is non--degenerate on all of $\barm$. Then
  we can form the inverse bundle metric $\Ph\in\Ga(S^2\Cal T^*)$ as in
  \ref{2.4}.

Over $M$, we can work in the splitting associated to $\nabla$, and
there we have the expression \eqref{h-form} for
$L(\tau^{-1}g^{ij})$. This immediately implies that, in the splitting
determined by $\nabla$, we have
\begin{equation}
  \label{Phiform}
  \Ph=\begin{pmatrix} (n+1)\tau(g^{ij}\Rho_{ij})^{-1} \\ 0 \\ \tau
    g_{ij}\end{pmatrix}, 
\end{equation}
so the top slot of this is a non--zero multiple of $\tau
S^{-1}$. Since this is the projecting slot, it is actually independent
of the choice of splitting, and passing to the splitting associated to
a connection $\tilde\nabla$ in the projective class which is smooth up
to the boundary, we conclude that $\tau S^{-1}$ admits a smooth
extension to all of $\barm$, so by Proposition \ref{prop:extend},
$\tau$ admits a smooth extension to $\barm$. 

Next, we claim that this smooth extension vanishes along the boundary
$\partial M$. Suppose that $x\in\partial M$ is a point such that
$\tau(x)\neq 0$. Then choose an open neighborhood $U$ of $x$ in
$\barm$ on which $\tau$ is nowhere vanishing. It is well known that
there is a unique connection $\hat\nabla$ in the restriction of the
projective class to $U$, such that $\tau|_U$ is parallel for the
induced connection on $\Cal E(2)$. But then over $U\cap M$, both
$\nabla$ and $\hat\nabla$ preserve $\tau$ and hence have to
agree. This contradicts the assumption that $\nabla$ does not extend
smoothly to any neighborhood of a boundary point.

Now we finally claim that $\tau\in\Ga(\Cal E(2))$ is a defining
density for $\partial M$, which in view of Lemma \ref{lem2.1}
completes the proof. Taking a connection $\tilde\nabla$ in the
projective class which is smooth up to the boundary as above, we have
to prove that $\tilde\nabla\tau$ is nowhere vanishing along $\partial
M$. From above we know that the top slot of $\Ph$ with respect to any
splitting is given by a non--zero constant multiple of $\tau
S^{-1}$. In particular, this top slot vanishes along $\partial M$. By
Proposition \ref{prop:Phi-equ}, we conclude that, along $\partial M$,
the middle slot of $\Ph$ in this splitting has to be a non--zero
multiple of $\tilde\nabla_i(\tau
S^{-1})=S^{-1}\tilde\nabla_i\tau+\tau\tilde\nabla_iS^{-1}$. Of course,
the second summand vanishes along the boundary, so there the middle
slot equals $\tilde\nabla_i\tau$, up to multiplication by a
nowhere--vanishing function. But we know that $\Ph$ is the inverse of
$L(\tau^{-1}g^{ij})$, so in particular it is non--degenerate as a
bilinear form (on $\mathcal{T}$) over all of $\barm$. By
non--degeneracy, vanishing of the top slot along $\partial M$ implies
that the middle slot has to be nowhere vanishing along $\partial M$.
\end{proof}

Knowing that a metric $g$ satisfying the assumptions of Theorem
\ref{thm:main} is projectively compact of order $\al=2$ allows
one to apply all the results of \cite{Proj-comp2} to $g$. We summarise
here some of the key points from there (that are mainly drawn from Theorems 7 and 11 in that source):

\begin{cor}\label{final}
  Let $\barm$ be a smooth $(n+1)$-manifold with boundary $\partial M$
  and interior $M$. Let $g=g_{ij}$ be a pseudo--Riemannian metric on
  $M$ which satisfies the three hypotheses of Theorem
  \ref{thm:main}. Then:

  (1) The smooth extension $S$ of the scalar curvature of $g$ to all
  of $\barm$ has a boundary value which is locally constant and
  nowhere vanishing.

  (2) Given any boundary point $x\in\partial M$ and local defining
  function for $\partial M$, there is a neighbourhood of $x$ on which
  $g$ admits the asymptotic form
$$
g=\frac{Cd\rho^2}{\rho^2}+\frac{h}{\rho},
$$
where $C=\tfrac{-n(n+1)}{4S(x)}$ (and so is constant) and $h$
is a symmetric $\binom02$--tensor field which admits a smooth
extension to the boundary, with boundary values being non--degenerate
on $T\partial M$.

(3) There is a canonical conformal structure on $\partial M$. This is
given locally by the conformal class of the restriction of $h$ to
$T\partial M$.

(4) The metric $g$ is asymptotically Einstein in the sense that the
trace--free part $R_{ab}-\tfrac{S}{n+1}g_{ab}$ of the Ricci tensor
admits a smooth extension to the boundary (while the Ricci tensor
evidently blows up).
\end{cor}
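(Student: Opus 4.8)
The plan is to derive the whole corollary from Theorem~\ref{thm:main} together with the structure theory of order-$2$ projectively compact metrics established in \cite{Proj-comp2}; indeed the statement is essentially a translation of Theorems~7 and~11 of that source into the present notation, with the single genuinely new ingredient being the scalar-curvature interpretation of Proposition~\ref{prop:extend}. First I would record the setup: by Theorem~\ref{thm:main} the metric $g$ is projectively compact of order $\al=2$, and, by the proof of that theorem together with Lemma~\ref{lem2.1}, the parallel defining density realising this is precisely $\tau=\vol(g)^{-2/(n+2)}$, smoothly extended across $\partial M$ and vanishing to first order there. This places us exactly in the hypothesis under which \cite{Proj-comp2} operates, so all conclusions of that paper are available.

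With this reduction in place, parts (2), (3) and (4) are direct citations. For (2) I would take the defining function $\rho$ compatible with the parallel density $\tau$ (so that, in the scale determined by $\rho$, the density $\tau$ is represented by a function vanishing to the same first order as $\rho$) and read off the normal form $g=Cd\rho^2/\rho^2+h/\rho$ from the asymptotic expansion of an order-$2$ projectively compact metric given in \cite{Proj-comp2}; the tensor $h$, its smooth extension, and the non-degeneracy of its boundary value on $T\partial M$ all come verbatim from there. Part (3) is then immediate, since a change of compatible defining function rescales $h|_{T\partial M}$ by a positive function, so its conformal class is a well-defined invariant of $\partial M$. Part (4) is the asymptotic-Einstein statement of \cite{Proj-comp2}: for an order-$2$ projectively compact metric the divergence of the Ricci tensor $R_{ab}$ is confined to its pure-trace part, so that $R_{ab}-\tfrac{S}{n+1}g_{ab}$ extends smoothly while $R_{ab}$ itself diverges like $\rho^{-1}$.

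The part I expect to be the main obstacle, and where this paper actually contributes, is (1) together with the explicit value $C=-n(n+1)/(4S(x))$. The bridge is Proposition~\ref{prop:extend}, which identifies $S$, up to a fixed nonzero constant, with $\det L(\tau^{-1}g^{ij})$; this lets me evaluate the boundary behaviour of $S$ from the \emph{parallel} tractor data rather than from the divergent curvature of $g$. Concretely, I would either insert the normal form of (2) into the definition of $S$, or evaluate $\det L(\tau^{-1}g^{ij})$ along $\partial M$ using the expansion, to pin down the relation between $S(x)$ and the coefficient $C$; the delicate bookkeeping here is matching conventions between this paper and \cite{Proj-comp2}---signs, the powers of $\rho$, and the weight and volume normalisations. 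Local constancy of $S|_{\partial M}$ then follows from \cite{Proj-comp2}, being the statement that the leading (``Einstein constant'') coefficient of the expansion is locally constant; via the identification above this is exactly the locally constant boundary value of the scalar curvature.
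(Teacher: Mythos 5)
Your proposal matches the paper's treatment: the paper gives no separate proof of Corollary \ref{final}, simply observing that once Theorem \ref{thm:main} establishes projective compactness of order $\al=2$, all four statements are read off from Theorems 7 and 11 of \cite{Proj-comp2}, which is exactly your strategy. Your additional remarks on pinning down the constant $C$ via Proposition \ref{prop:extend} and the determinant identity go slightly beyond what the paper writes down but are consistent with its logic.
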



\begin{thebibliography}{10}

\bibitem{BEG} T.N.\ Bailey, M.G.\ Eastwood, and A.R.\ Gover, {\em
    Thomas's structure bundle for conformal, projective and related
    structures}, Rocky Mountain J.\ Math.\ {\bf 24} (1994),
  1191--1217.

\bibitem{Proj-comp} A.\ \v Cap, A.R.\ Gover, {\em Projective
  compactifications and Einstein metrics}, to appear in J.\ reine
  angew.\ Math., DOI 10.1515/crelle-2014-0036, arXiv:1304.1869 

\bibitem{Proj-comp2} A.\ \v Cap, A.R.\ Gover, {\em Projective compactness
  and conformal boundaries}, preprint arXiv:1406.4225 version 2. 

\bibitem{CGHjlms} A.\ \v Cap, A.R.\ Gover, M.\ Hammerl, {\em
  Projective BGG equations, algebraic sets, and compactifications of
  Einstein geometries}, J.\ Lond.\ Math.\ Soc.\ (2) {\bf 86} (2012),
  433--454.

\bibitem{CGM} A. \v Cap, A.R. Gover, and H. Macbeth, {\em Einstein metrics
  in projective geometry}, Geom. Dedicata \textbf{168} (2014) 235--244.

\bibitem{sachs} D.\ Eardley, R.K.\ Sachs, 
{\em Space-times with a future projective infinity},
J.\ Mathematical Phys.\ {\bf 14} (1973), 209--212. 

\bibitem{EastwoodNotes} M.G.\ Eastwood, {\em Notes on projective
  differential geometry}. In: Symmetries and overdetermined systems of
  partial differential equations, 41--60, IMA Vol. Math. Appl., 144,
  Springer, New York, 2008.

\bibitem{Eastwood-Matveev} M.G.\ Eastwood, and V.\ Matveev, {\em Metric
  connections in projective differential geometry},  in ``Symmetries and
  overdetermined systems of partial differential equations'',
  339--350, IMA Vol. Math. Appl., 144, Springer, New York, 2008.

\bibitem{HSSS} M.~Hammerl, P.~Somberg, V.\ Sou\v cek, J.\ Silhan, {\em On a
  new normalization for tractor covariant derivatives},
  J.\ Europ.\ Math.\ Soc.\ \textbf{14} no.\ 6 (2012) 1859--1883

\bibitem{Mikes} J.\ Mikes, {\em Geodesic mappings of affine-connected
  and Riemannian spaces}, Jour.\ Math.\ Sci.\ {\bf 78} (1996) 311--333.

\bibitem{Sinjukov} N.S.\ Sinjukov, {\em geodesic mappings of
  Riemannian spaces}, (Russian), ``Nauka,'' Moscow, 1979, 256pp.

\end{thebibliography}
\end{document}